\newif\ifAMS
\AMStrue\usepackage{amssymb}}
\theoremstyle{plain}
\newtheorem*{Co-area}{Length area inequality}
\newtheorem*{Loewner}{Loewner's inequality}
\newtheorem{Thm}{Theorem}[section]
\theoremstyle{definition}
\theoremstyle{remark}
\newcommand{\interior}{^{ \kern-5pt ^\circ}}
\newcommand {\al}{\alpha}
\begin{document}
\title
{Short loops in surfaces with a circle boundary component }

\author
{Panos Papasoglu }


\email {} \email {papazoglou@maths.ox.ac.uk}

\address
{Mathematical Institute,  University of Oxford, Andrew Wiles Building, Woodstock Rd, Oxford OX2 6GG, U.K. }

\address
{ }

\begin{abstract} It is a classical theorem of Loewner that the systole of a Riemannian torus can be bounded in terms of its area.
We answer a question of a similar flavor of Robert Young showing
that if $T$ is a Riemannian 2-torus with boundary in $\mathbb R ^n$, such that the boundary curve is a standard unit circle,
then the length of the shortest non-contractible loop in $T$ is bounded in terms of the area of $T$.
\end{abstract}
\maketitle
\section{Introduction}
Robert Young in \cite{Y} conjectures the following:
There is a constant $M>0$ such that if $K \subset \mathbb R ^n$ is an embedded torus with one boundary component
and $\partial K$ is a unit circle, then there is a closed curve of length $\ell$ in $K$ which is not
null-homotopic and satisfies
$\ell^2\leq M(area \,K-\pi)$.

Note that the area of a disk bounding the unit circle is $\pi $ so by a surgery one can get
a torus with boundary $K$ with area arbitrarily close to $\pi $. What the conjecture really says is that if the area is close to
$\pi $ then necessarily there is a `short' non null homotopic curve in $K$. Clearly if the area is much bigger than $\pi $, say $2\pi $, then the conjecture follows from the classical result of Loewner \cite {Pu}. 

The purpose of this note is to show that this conjecture holds. In fact we show a slightly stronger result namely that the inequality holds 
for the length of a non-separating simple closed curve in $K$. We show further
 that this result also holds for any orientable surface $S$ with
a single boundary component equal to the unit circle. 

I would like to thank S. Sabourau for suggesting that my proof applies to higher genus surfaces as well.

\section{Length area inequality and systoles}

We will use the co-area formula \cite[ Theorem 13.4.2]{BZ}, which we state now in a simplified form:

\begin{Co-area} \label{coarea}Let $M$ be a Riemannian 2-manifold and let $f:M\to \mathbb R$ be a 1-Lipschitz function. Then
$$\rm{area}\, (M)\geq \int \rm {length}\, (f^{-1}(t))dt .$$

\end{Co-area}

We recall also Loewner's inequality

\begin{Loewner} \label{Loewner}Let $T$ be a Riemannian 2-torus. Then $T$ has a non-contractible geodesic $\gamma $ of length $\ell$ satisfying
$\ell ^2\leq \dfrac {2}{\sqrt 3} \rm{area } T$.

\end{Loewner}

We state now our main result:

\begin{Thm} Let $T$ be a Riemannian torus with a single boundary component embedded in $\mathbb R ^n$. Assume that $\partial T$ is isometric to the
unit circle. Then there is a non separating simple closed curve in $T$ of length $\ell$ such that $$\ell^2\leq 10^3(\rm{area} \,T-\pi).$$

\end{Thm}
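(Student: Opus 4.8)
The plan is to treat the problem in two regimes according to the size of the excess area $A-\pi$, where $A=\mathrm{area}\,T$. In both regimes I would first use the embedding to cap off: since $\partial T$ is a unit circle in $\mathbb R^n$ it bounds a flat disk $D$ of area exactly $\pi$ in the affine plane it spans, and gluing $D$ to $T$ along $\partial T$ produces a closed Riemannian torus $\hat T = T\cup_\partial D$ of area $A+\pi$ (smoothing the seam if desired). If the excess is already large, say $A\ge 2\pi$, then Loewner's inequality applied to $\hat T$ yields a non-contractible geodesic of length $\ell$ with $\ell^2\le \tfrac{2}{\sqrt 3}(A+\pi)$; since $A\ge 2\pi$ gives $A+\pi\le 3(A-\pi)$, this already beats $10^3(A-\pi)$. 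A systolic loop on a torus is simple and non-separating, and because $D$ is a disk the loop may be pushed off $D$ into $T$ while staying non-separating (capping the null-homologous curve $\partial T$ induces an isomorphism $H_1(T)\to H_1(\hat T)$). So the entire difficulty lies in the regime $A-\pi$ small, which is the case I would develop.

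For that regime I would sweep out $T$ by the $1$-Lipschitz function $f(x)=d_T(x,\partial T)$, whose level sets $f^{-1}(t)$ lie in $T$ for every $t>0$. After a generic perturbation making $f$ Morse on $\hat T$, the fact that $\hat T$ is a torus ($\chi=0$) while the sublevel set $\{f\le 0\}=D$ is a disk forces exactly two saddle values $s_1<s_2$ and one maximum value $R$. The standard picture of a sweep-out of a torus then shows that for every $t\in(s_1,s_2)$ the level set $f^{-1}(t)$ contains a simple closed curve that is non-separating in $\hat T$, hence, via the homology isomorphism above, non-separating in $T$ as well; being a level set with $t>0$ it is an honest curve in $T$. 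Writing $\sigma$ for the length of the shortest non-separating simple closed curve in $T$, this gives $L(t):=\mathrm{length}(f^{-1}(t))\ge \sigma$ for all $t\in(s_1,s_2)$, and the Length area inequality gives $A\ge\int_0^R L(t)\,dt\ge \sigma\,(s_2-s_1)$.

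To convert this into the bound $\sigma^2\le 10^3(A-\pi)$ I would use the ambient isoperimetric inequality in $\mathbb R^n$ in two ways. First, to account for the disk's area: the region $\{f\le s_1\}\subset T$ is a collar annulus, and together with $D$ it is a disk in $\mathbb R^n$ spanning the level curve, so comparing its area with the minimal disk of that curve (of area at most $(\mathrm{length})^2/4\pi$) and with the lower bound $\pi$ for any disk spanning the unit circle should show that the ``contractible'' part of the sweep-out below $s_1$ carries essentially the full $\pi$, so that the essential part $\int_{s_1}^{s_2}L\,dt$ is trapped inside the excess $A-\pi$. Second, I would bound the transverse width $s_2-s_1$ below by a fixed multiple of $\sigma$ by applying the same ambient isoperimetric inequality to the essential level curves (a short curve cannot enclose the area that the handle forces between the two saddles), yielding $A-\pi\gtrsim \sigma\cdot(s_2-s_1)\gtrsim \sigma^2$ with a constant comfortably below $10^3$. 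I expect this last step to be the main obstacle: cleanly subtracting off the disk's area $\pi$ and producing the matching width estimate $s_2-s_1\gtrsim\sigma$ is exactly where the hypothesis that $T$ is embedded in $\mathbb R^n$ (rather than abstractly filled) is indispensable, and it is where the crude constant $10^3$ is spent.
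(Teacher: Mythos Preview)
Your large-excess paragraph is fine (modulo the easy detail of pushing the Loewner loop off $D$ without losing too much length), but the small-excess argument has two genuine gaps, and both come from the choice of sweepout: you sweep by the \emph{intrinsic} distance $f=d_T(\cdot,\partial T)$, whereas the paper sweeps by the ambient coordinate functions $X=x_1$ and $Y=x_2$.

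First gap: the step ``subtract off $\pi$''. With the intrinsic sweepout, your level curve $\gamma=f^{-1}(s_1^-)$ is just some separating curve in $T$; there is no a priori control on $L(\gamma)$, so your inequality $\mathrm{area}\{f\le s_1\}\ge \pi - L(\gamma)^2/4\pi$ is vacuous when $L(\gamma)$ is large. In the paper the corresponding step is automatic: for each $t\in(-1,1)$ the level set $X^{-1}(t)$ meets $\partial T$ in two prescribed points, hence contains an arc $\alpha_t$ spanning them, and $\mathrm{length}(\alpha_t)\ge 2\sqrt{1-t^2}$; since $\int_{-1}^1 2\sqrt{1-t^2}\,dt=\pi$, the co-area formula gives $\mathrm{area}(T)\ge \pi$ plus the integral of whatever extra length the $\alpha_t$ carry. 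The ambient coordinates are exactly what makes the $\pi$ appear for free.

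Second gap: the claimed width estimate $s_2-s_1\gtrsim\sigma$. Your justification (``a short curve cannot enclose the area that the handle forces between the two saddles'') does not make sense: the level curves in $(s_1,s_2)$ are non-separating and enclose nothing, and there is no isoperimetric inequality available for them. In fact no such lower bound holds in general, and the paper proceeds by the \emph{opposite} dichotomy: either the interval of $t$'s for which $X^{-1}(t)$ contains a non-separating loop is long ($>\ell/10$), in which case co-area plus the $\pi$ above finishes immediately, or it is short for \emph{both} $X$ and $Y$. In the short--short case the paper localizes the handle to a small ``square'' $[a,b]\times[c,d]$ and uses pieces of $\alpha_a,\alpha_b,\beta_c,\beta_d$ to build a separating simple closed curve $w$ of length strictly less than $\ell$ (in fact $<9\ell/10$). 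Then the component $T_1$ containing $\partial T$ has area $\ge \pi - \frac{1}{4\pi}(\frac{9\ell}{10})^2$ by the ambient isoperimetric inequality, while the other component $T_2$, capped by a disk of negligible area, is a closed torus whose systole is at least $\ell/2$, so Loewner gives $\mathrm{area}(T_2)\gtrsim \ell^2$. Adding the two yields $\mathrm{area}(T)>\pi + c\,\ell^2$.

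So the missing idea is: do not try to prove the handle is thick; instead use \emph{two} ambient linear sweepouts to trap it inside a short separating curve, and then run Loewner on the piece beyond that curve. Your intrinsic-distance sweepout does not see the ambient geometry well enough to do either of these steps.
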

\begin{proof}
Let $\ell$ be the length of the shortest non separating simple closed curve in $T$. 

Without loss of generality we may assume that $\partial T$ lies on the $xy$-plane. We may further assume that the functions
$X:(x_1,...,x_n)\to x_1$ and $Y:(x_1,...,x_n)\to x_2$ are Morse functions for $T$. Indeed a slight linear perturbation of $X,Y$ gives Morse functions \cite[p.43]{GP}, 
and this slight perturbation won't affect significantly the calculations that follow. Alternatively this can be obtained by slightly deforming $S$.

We note that if $X^{-1}(t)$ contains a non separating loop then 

\noindent $\rm{length}\,(X^{-1}(t))\geq \ell$. Note also that by Morse Theory if for some $a<b$
$X^{-1}(a), X^{-1}(b)$ contain a non separating loop then $X^{-1}(t)$ contains a non separating loop for all $a<t<b$. We remark that
for each $t\in [-1,1]$, $X^{-1}(t)$ contains a simple arc $\al _t$ spanning $X^{-1}(t)\cap \partial T$. Clearly $\al _t$ has length greater than the
corresponding geodesic $\gamma _t$ joining the same points. Assume now that for some $a<b $ with $b-a>\ell/10$ $X^{-1}(a), X^{-1}(b)$ contain a non separating loop. Then by the length-area
inequality 
$$\rm{area}\, (T)\geq \int \rm {length}\, (\al _t)dt +\ell^2/10\geq \pi +\ell^2/10 .$$
It follows that in this case the theorem holds as $\ell^2<10^3(\ell^2/10)$.

Similarly if the set of $t$ for which $$\rm {length}\, (\al _t)\geq \rm {length}\, (\gamma _t)+\dfrac {\ell}{10}$$
has measure greater or equal to $\ell/100$ then
$$\rm{area}\, (T)\geq \int \rm {length}\, (\al _t))dt \geq \pi+\ell^2/1000 $$ and the theorem holds.

Let $[a_1,b_1], [c_1,d_1] $ be maximal intervals with the property that

 $X^{-1}(a_1), X^{-1}(b_1)$ contain a non separating loop and $Y^{-1}(c_1), Y^{-1}(d_1)$ contain a non separating loop. Clearly $b_1-a_1\leq \ell/10, d_1-c_1\leq \ell/10$. For each $t\in [-1,1]$ denote by $\beta _t$  the simple arc in $Y^{-1}(t)$ spanning $Y^{-1}(t)\cap \partial T$ and by $\gamma '_t$ the geodesic arc joining the same points.

By the previous argument there are $a,b,c,d$ with
$0\leq a_1-a\leq \ell/100,0\leq b- b_1\leq \ell/100,0\leq c_1-c\leq \ell/100,0\leq d-d_1\leq \ell/100$ such that for $z\in \{a,b\}$
$$\rm {length}\, (\al _z)-\rm {length}\, (\gamma _z)\leq \ell/10$$ and for $z\in \{c,d\}$
$$\rm {length}\, (\beta _z)-\rm {length}\, (\gamma ' _z)\leq \ell/ 10 .$$

We consider now the union of arcs:

$\al _a, \al _b$ restricted to $[c,d]$ and $\beta _c, \beta _d$ restricted to $[a,b]$. This union is a separating simple closed
curve $w$ on $T$. Let's denote by $T_1$ the connected component of  $T\setminus w$ containing $\partial T$ and by $T_2$ the other connected component of  $T\setminus w$. We remark that $w$ has the following properties:

1. ${\rm length} (w)\leq \dfrac {8\ell}{10}+\dfrac {8\ell}{100}<\dfrac {9\ell}{10}$.

2. The shortest non separating loop in $T$ is homotopic to a loop contained in $T_2$. Indeed every loop in $T_1$ is separating
and any arc with endpoints on $w$ is homotopic to a subarc of $w$. 

We note that already property 1 above suffices to answer Young's original question if we interpret $\ell$ in the previous part of
the proof to be the length of the shortest non-null homotopic loop.

By the isoperimetric inequality 

$$\rm {area} (T_1)\geq \pi -\dfrac{1}{4\pi }\left (\dfrac {9\ell}{10}\right )^2 \ \ \  \ \ \ \ (*).$$

We fill $w$ by a disk $D$ of arbitrarily small area to obtain a torus $T'=D\cup T_2$ of area less or equal to 
$\rm {area} (T_2)+\epsilon$ for some arbitrarily small $\epsilon >0$.  By choosing carefully the metric on the gluing we can make sure that $T'$ is a smooth riemannian manifold.

If $\ell_1$ is the length of the shortest non separating loop on $T'$ since $\partial T_2$ has length less than
$9\ell/10$ we have that $\ell\leq \ell_1+\dfrac {9\ell}{20}$ so $\ell\leq 2\ell_1$.
Applying Loewner's inequality to $T'$ we have that
$$\ell_1^2\leq \dfrac{2}{\sqrt 3}(\rm {area} (T_2)+\epsilon )$$
and since this holds for any $\epsilon >0$ we obtain
$$\rm {area} (T_2)\geq \dfrac{\sqrt 3}{2}\ell_1^2 \Rightarrow \rm {area} (T_2)>\dfrac{\ell^2}{8 } .$$
Combining this with (*) we have
$$\rm {area} (T)=\rm {area} (T_1)+\rm {area} (T_2) >\pi+\dfrac {1}{100}\ell^2.$$
This inequality clearly implies the theorem.

%

%
%
%
%
%
%
%
%
%

\end{proof}

\begin{Thm} Let $S$ be a Riemannian surface with a single boundary component embedded in $\mathbb R ^n$. Assume that $\partial S$ is isometric to the
unit circle. Then there is a non separating loop in $S$ of length $\ell$ such that $$\ell^2\leq C(\rm{area} \,S-\pi)$$ where $C$ is a universal constant that does not
depend on $S$.

\end{Thm}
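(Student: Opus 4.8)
The plan is to run the argument of Theorem 1 almost verbatim, replacing Loewner's inequality by a systolic inequality valid in arbitrary genus whose constant can be taken independent of the genus. Write $g\geq 1$ for the genus of $S$ and let $\ell$ be the length of a shortest non-separating loop. As before I take the two coordinate projections $X,Y$ of $\mathbb R^n$ onto the $xy$-plane of $\partial S$ to be Morse on $S$, and I record the same two facts: a level set $X^{-1}(t)$ carrying a non-separating loop has length $\geq\ell$, and by Morse theory the set of such $t$ is an interval. Running the same two dichotomies — a long interval of non-separating levels, or a set of positive measure on which the spanning arc $\alpha_t$ is much longer than the competing geodesic $\gamma_t$ — either already forces $\mathrm{area}(S)\geq \pi + c\,\ell^2$, in which case we are done, or else produces short maximal intervals $[a_1,b_1]$, $[c_1,d_1]$ and the four spanning arcs from which one assembles a single separating simple closed curve $w$ of length $\leq\eta\,\ell$ cutting $S$ into $S_1\supset\partial S$ and $S_2$.

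The lower bound on $\mathrm{area}(S_1)$ is unchanged and does not use the genus: capping $w$ by a disk of area $\leq(\mathrm{length}\,w)^2/4\pi$ produces a surface spanning the unit circle, whose area is $\geq\pi$, so $\mathrm{area}(S_1)\geq \pi-\eta^2\ell^2/4\pi$. Next I fill $w$ by a small disk $D$ to obtain a closed surface $S'=D\cup S_2$, now of genus $g$, and let $\ell_1$ be the length of a shortest non-separating loop of $S'$; the genus-uniform systolic inequality gives $\ell_1^2\leq C_g(\mathrm{area}(S_2)+\epsilon)$, where $C_g$ is bounded above by a universal constant. Exactly as in Theorem 1, pushing a shortest non-separating loop of $S'$ off $D$ across $w$ shows $\ell\leq 2\ell_1$, hence $\mathrm{area}(S_2)\geq \ell^2/4C_g$. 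Adding the two area estimates yields $\mathrm{area}(S)-\pi\geq(1/4C_g-\eta^2/4\pi)\ell^2$, which is a positive multiple of $\ell^2$ once $\eta$ is chosen with $\eta^2<\pi/C_g$; since all constants are universal this gives the desired $\ell^2\leq C(\mathrm{area}\,S-\pi)$.

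Two points require more than a cosmetic change from the genus-$1$ case, and the first is the main obstacle. I must verify that the curve $w$ built from the two coordinate directions confines the entire genus of $S$ to $S_2$; equivalently, that $S_1$ is planar, for this is exactly what makes every loop in $S_1$ separating, lets any arc with endpoints on $w$ be pushed onto $w$, and guarantees both that $S'$ has genus $g\geq 1$ and that a non-separating loop of $S'$ lying in $S_2$ remains non-separating in $S$. The genus-$1$ proof obtained this for free; in general I would argue that outside the rectangle $[a_1,b_1]\times[c_1,d_1]$ the level sets in each coordinate are separating, so by the Morse-theoretic convexity all homologically essential level curves — hence all the genus — lie over the rectangle, forcing the complementary piece $S_1$ containing $\partial S$ to have genus $0$. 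The second point is simply to invoke a systolic inequality holding in every genus with a constant $C_g\leq C_0$ independent of $g$ (for $g=1$ this is Loewner, and for $g\geq 2$ it is Gromov's inequality for the non-separating, i.e.\ homological, systole); because $C_0$ is universal, the re-balancing of the numerical constants in the two dichotomies and in the choice of $\eta$ affects the final constant $C$ only by a universal factor.
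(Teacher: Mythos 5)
Your overall strategy is the one the paper itself uses (both are modeled on the proof of Theorem 2.1), but the step you yourself flag as ``the main obstacle'' is resolved incorrectly, and this is a genuine gap. The convexity claim you invoke --- that the set of $t$ for which $X^{-1}(t)$ contains a loop non-separating in $S$ is a single interval --- is special to the once-punctured torus and fails already in genus $2$. Concretely: take a flat disk spanning the unit circle and attach two tiny handles over disjoint ranges of $x$, say near $x=-0.15$ and $x=0.15$. Levels through either handle carry non-separating loops (the belt circles of the handles), but every level in between consists solely of arcs spanning the boundary circle. Consequently your first dichotomy collapses: $X^{-1}(a_1)$ and $X^{-1}(b_1)$ carry non-separating loops while $b_1-a_1$ is enormous compared with $\ell$ (the handles, and with them $\ell$, can be made arbitrarily small), yet the co-area formula yields no excess area because the intermediate levels have no excess length. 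So you can conclude neither $\mathrm{area}(S)\geq \pi+c\,\ell^2$ nor that your rectangle is small; and since the length of $w$ is controlled only up to additive terms $2(b_1-a_1)+2(d_1-c_1)$, the curve $w$ is then not short relative to $\ell$, and the final balance $(1/4C_g-\eta^2/4\pi)\ell^2>0$ is unattainable. Shrinking the rectangle to a single connected component of the set of non-separating levels restores the length bound on $w$, but then your claimed planarity of $S_1$ is false (the remaining handles sit in $S_1$), and it is no longer clear that the rectangle --- whose $Y$-side must also be chosen --- captures any handle at all, since a handle detected by $X$-levels in $[a_1,b_1]$ may be spread over a long range of $Y$.

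The paper's proof differs from yours at exactly this point: it never asserts convexity, but records only what co-area actually gives, namely that the non-separating levels are contained in a finite union of intervals $[a_1,b_1]\cup\dots\cup[a_n,b_n]$ of total measure at most $\ell/10$ (each such level has excess length at least $\ell$, so larger total measure forces $\mathrm{area}(S)\geq\pi+\ell^2/10$). Doing the same in the $Y$-coordinate and running the construction of Theorem 2.1 over the resulting grid produces a \emph{collection} of separating curves $w_1,\dots,w_k$, each of length less than $\ell$, rather than a single one; Gromov's systolic inequality (\cite[Cor. 5.2.B]{Gr1}, \cite[sec. 2.C]{Gr2}) is then applied to the cut-off pieces $S_i$ not containing $\partial S$. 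Your second point --- that Loewner must be replaced by a genus-uniform inequality for the homological (non-separating) systole --- is correct and agrees with the paper; what must be repaired is the reduction to a single rectangle. In the multi-curve version the bookkeeping is done per piece: each capping disk costs at most $\mathrm{length}(w_i)^2/4\pi$ in the isoperimetric lower bound for the component containing $\partial S$, while each genus-carrying piece gains area at least a universal multiple of $\ell^2$ by Gromov, so the number of curves cancels and only the per-piece comparison of constants (your choice of $\eta$) matters.
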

\begin{proof} The argument of the previous theorem applies with little change. Let $\ell$ be the length of the shortest non separating simple closed curve in $S$. 
If $X:(x_1,...,x_n)\to x_1$ is a Morse function for $S$
(where $\partial S$ lies on the $xy$-plane) then in this case there are $a_1<b_1<a_2<b_2<...<a_n<b_n$ such that all
non-separating loops of $S$ lie in $X^{-1}([a_1,b_1]\cup...\cup [a_n,b_n])$ where $\sum (b_i-a_i)\leq \ell/10$.
Using in a similar way the Morse function $Y:(x_1,...,x_n)\to x_2$ and arguing as in the previous theorem we arrive at a collection
of separating simple closed curves $w_1,...,w_k$ each of which has length less than $\ell $. If we denote by $S_i$ the connected
component of $S\setminus w_i$ that does not contain $\partial S$ then we may apply Gromov's generalization of Loewner's inequality for $S_i$
(see \cite[ sec. 2.C]{Gr2}, \cite[Cor. 5.2.B]{Gr1}) to obtain the desired bound on $\ell $.
\end{proof}

\section{Discussion}

Theorems 2.1, 2.2 `quantify' the defect of a filling of $S^1$ by a general surface rather than a disc. They say that if the filling
by a surface is `close' to the optimal filling by a disc then the surface is `close' to a disc as it has a short non-separating geodesic.
This has a similar flavor to the classical Bonnesen inequality \cite{Bo} on isoperimetric defect quantifying how far is a region from being optimal
for the isoperimetric inequality. A strengthening of Loewner's inequality in this spirit is given in \cite{HKK}.
We note also that Babenko in \cite{Ba} has studied systoles of manifolds with boundary. 

A related well known question to theorems 2.1, 2.2 is the conjecture of Gromov on the filling
volume (area) of $S^1$ \cite[sec.5.5, p.60]{Gr1}. One wonders if the analog of theorem 2.1 holds in this case, namely whether
if $T$ is a torus with boundary filling $S^1$ then there is a non-separating curve of length $\ell $ in $T$ satisfying
$$\ell^2\leq C(\rm{area} \,T-2\pi)$$ for some universal constant $C$. Note that the Gromov's conjecture is known to hold for tori
with boundary \cite{BCIK} but is still open for higher genus surfaces.

\end{document}
\bye